\newcommand{\R}{\mathbf R}
\newcommand{\na}{\nabla}
\newcommand{\nablahat}{\widehat{\nabla}}
\newcommand{\nablan}{\nabla^N}
\renewcommand{\a}{\alpha}
\newcommand{\la}{\lambda}
\newcommand{\be}{\beta}
\newcommand{\ga}{\gamma}
\newcommand{\irho}{\rho^{-1}}
\newcommand{\ze}{Z^*}
\newcommand{\dij}{\delta_{ij}}
\renewcommand{\>}{\rangle}
\newcommand{\del}{\partial}
\newcommand{\id}{\operatorname{id}}
\newcommand{\tr}{\operatorname{tr}}
\newcommand{\grad}{\operatorname{grad}}
\newcommand{\nx}[2]{\nabla_{X_{#1}}X_{#2}}
\newcommand{\gam}[3]{\Gamma_{{#1}{#2}}^{#3}}
\newcommand{\hatgam}[3]{\widehat\Gamma_{{#1}{#2}}^{#3}}
\theoremstyle{definition}
\newtheorem{df}{Definition}[section]
\newtheorem{rem}[df]{Remark}
\theoremstyle{plain}
\newtheorem{theo}[df]{Theorem}
\newtheorem{prop}[df]{Proposition}
\newtheorem{lem}[df]{Lemma}
\numberwithin{equation}{section}
\begin{document}

\parindent=0mm
\parskip=3mm

\title[Quasi-umbilical hypersurfaces congruent to their centre
map]{Quasi-umbilical affine hypersurfaces congruent to their centre map}

\begin{abstract}
In this paper, we study strictly convex affine hypersurfaces centroaffinely
congruent to their centre map, in the case when the shape operator has two
distinct eigenvalues: one of multiplicity $1$, and one nonzero of
multiplicity $n-1$. We show how to construct them from $(n-1)$-dimensional
affine hyperspheres.
\end{abstract}

\author{A. J. Vanderwinden}
\address{Laboratoire de Math\'ematiques\\
Universit\'e de Valenciennes\\
Le Mont Houy -- B\^atiment ISTV2\\
59313 Valenciennes Cedex 9\\
France}
\email{ajvdwind@free.fr}
\subjclass[2000]{53A15}
\keywords{affine differential geometry, quasi-umbilical, center map}
\date{June 1, 2012}

\maketitle

\section{Introduction}

In \cite{FV}, the authors introduced the notion of \emph{centre map} for a
centroaffine hypersurface and studied affine hypersurfaces centroaffinely
congruent to their centre map, completely solving the problem for
positive definite surfaces.

The solution to this problem is known in higher dimensions for
positive definite improper affine hyperspheres \cite{Tn} (i.e.\ for which the shape
operator $S$ identically vanishes), and for generic hypersurfaces \cite{Tg} (i.e.\ for which
$S$ has $n$ different, nonzero eigenvalues). In this paper, we investigate
the intermediate case of positive definite quasi-umbilical hypersurfaces,
i.e.\ when $S$ has two distinct eigenvalues: $\la_0$, of
multiplicity $1$, and $\la_1$ of multiplicity $n-1$.\\
More precisely, we prove the
\begin{theo}
 \label{resultat}
 Let $f:M^n\to \R^{n+1}$ be an affine immersion centroaffinely congruent to
its centre map $c$. Assume that both $f$ and $c$ are centroaffine, that the
Blaschke metric $h$ is positive definite, and that $f$ is quasi-umbilical,
with the
multiple eigenvalue $\la_1\neq 0$.
\\
Then such a hypersurface exists iff $\la_0+\la_1<0$, and in that case
 $(M,h)$ is locally isometric to a warped product
$\R\times_{e^F} N^{n-1}$. Moreover,
\begin{itemize}
\item if $(n+2)\la_0+n\la_1\neq 0$, then there exists a proper affine
hypersphere $g_2:N\to\R^n$ such that, up to an affine transformation of $\R^{n+1}$,
\begin{equation}
 \label{f}
 f(t,\vec u)=\left(\;t^{-2K_1} g_2(\vec u)\;,\;\frac{t^N}N\;\right),
\end{equation}
where $K_1$ and $N$ are constants related to the $\la_i$'s.
\newpage
\item if $(n+2)\la_0+n\la_1=0$, then, up to an affine transformation of
$\R^{n+1}$,
\begin{equation}
 f(t,\vec u)=\left(\;t^{-2K_1}\;,\;t^{-2K_1}\,\vec u\;,\;\varphi_0\,t^{-2K_1}\bigl(\mathcal
F(\vec u)-\frac1{2K_1}\log t\bigr)\;\right),
\end{equation}
where $\mathcal F$ is a solution of the Monge--Amp\`ere equation, and
$K_1,\varphi_0$ are constants.
\end{itemize}
The converse also holds.
\end{theo}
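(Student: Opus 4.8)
The plan is to turn the congruence into a system of equations for the affine invariants, solve it under quasi-umbilicity to obtain a warped-product splitting, and then integrate along the base to produce the explicit formulae.

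\emph{Step 1: translating the congruence.} Centroaffine congruence means that the full centroaffine data of $c$ agree with those of $f$: the induced connection $\nablahat$, the metric and the shape operator of $c$ must coincide, up to the fixed $A\in GL(n+1,\R)$, with $\na,h,S$ of $f$. To exploit this I would first express the invariants of $c$ through those of $f$. Fixing an $h$-orthonormal frame $e_0,e_1,\dots,e_{n-1}$ with $Se_0=\la_0 e_0$ and $Se_a=\la_1 e_a$ ($1\le a\le n-1$), I differentiate the centre map $c=f+S^{-1}\xi$ using the equiaffine Weingarten equation $\na_X\xi=-f_*(SX)$ and the Gauss equation $\na_X f_*Y=f_*(\na_X Y)+h(X,Y)\xi$; this writes $c_*$, and hence $\nablahat$, $\hat h$, $\hat S$, in terms of $\na,h,S$ and the cubic form $C=\na h$. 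Imposing $\nablahat=\na$, $\hat h=h$ (up to scale) and the matching of shape operators produces the system I would then solve, with all quantities decomposed along $e_0$ and along $\mathcal D_1=\mathrm{span}\{e_1,\dots,e_{n-1}\}$.

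\emph{Step 2: quasi-umbilicity forces a warped product.} The Codazzi equation $(\na_X S)Y=(\na_Y S)X$ with exactly two eigenvalues is the standard engine producing a warped structure: it forces the $\la_1$-eigendistribution $\mathcal D_1$ to be integrable with totally umbilic (spherical) leaves, makes $e_0$ (up to scale) the gradient direction of $\la_1$, and forces $\la_1$ to depend only on the parameter $t$ along the $e_0$-flow. Combining this with the equations from Step~1 pins down how $\la_0,\la_1$ vary and gives $\na_{e_0}e_0\parallel e_0$, so the integral curve of $e_0$ is an $h$-geodesic line yielding the $\R$-factor while the leaves of $\mathcal D_1$ give $N^{n-1}$. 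Writing the warping function as $e^F$ then delivers the asserted local isometry $(M,h)\cong\R\times_{e^F}N^{n-1}$.

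\emph{Step 3: integrating along $t$ and the case split.} On each leaf the induced affine structure is, after the normalisations above, that of an $(n-1)$-dimensional affine hypersphere, and the position/transversal data split as a product whose $t$-dependence is governed by a single linear ODE with coefficients built from $\la_0,\la_1$. The quantity $(n+2)\la_0+n\la_1$ is precisely the indicial/resonance coefficient of this ODE: when it is nonzero the solution is a genuine power, giving the factor $t^{-2K_1}$ and the component $t^{N}/N$ of \eqref{f}, and the leaf factor is a \emph{proper} affine hypersphere $g_2$; when it vanishes the repeated root forces the extra $\log t$ term and the leaf degenerates to an improper affine hypersphere, i.e. a graph solving a Monge--Amp\`ere equation. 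The constants $K_1,N,\varphi_0$ are read off from the $\la_i$, and the inequality $\la_0+\la_1<0$ emerges as the sign required for the warping exponent and the powers $t^{-2K_1}$ to be real and for $h$ to remain positive definite, which is exactly where strict convexity enters.

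\emph{Converse and main obstacle.} For the converse I would start from the explicit formulae, compute the affine normal and shape operator directly, and verify that $f$ is centroaffine, quasi-umbilical with multiple eigenvalue $\la_1\neq0$, and satisfies $c=A\circ f$ for the evident block-diagonal $A$. I expect the genuine difficulty to be \emph{Step 2}: disentangling the Codazzi relations from the congruence equations to prove rigorously that $\la_1$ is a function of $t$ alone and that $\mathcal D_1$ has spherical leaves, since this is the step that upgrades pointwise eigenvalue data to a global warped-product decomposition. Once that structure is secured, the ODE in Step~3, the resonance-based case split, and the sign analysis are comparatively mechanical.
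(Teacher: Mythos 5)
Your overall architecture (congruence $\Rightarrow$ structure equations $\Rightarrow$ warped product via Codazzi and quasi-umbilicity $\Rightarrow$ ODE in $t$ with a resonance-based case split) does match the paper's, and your reading of $(n+2)\la_0+n\la_1=0$ as the resonant case that produces the $\log t$ term and degrades the leaf to an \emph{improper} affine hypersphere is exactly right. But Step 1 starts from the wrong object: the centre map is $c=f-\rho\xi$, where $\rho$ is the affine support function defined by the decomposition $f=f_*Z+\rho\xi$, not $c=f+S^{-1}\xi$. This is not a cosmetic slip, because the entire proof runs on the pair $(\rho,\ze)$ with $\ze=\irho Z$: the congruence is equivalent (Furuhata--Vrancken) to the system \eqref{ccf1}--\eqref{ccf4}, and it is the interplay of $X(\rho)=-\rho\,h(X,\ze)$ with the fact that $\rho\la_j$ is constant that yields $X_i(\la_j)=a_i\la_j$ and hence, via the Codazzi equation for $S$, that $\ze=a_0X_0$. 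Without $\rho$ and $\ze$ in the picture, ``matching the invariants of $c$ with those of $f$ up to $A$'' does not by itself produce a tractable system, and you have no candidate for the function whose flow gives the $\R$-factor.

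Two further concrete gaps. First, the inequality $\la_0+\la_1<0$ does not arise from requiring real exponents or positivity of $h$ (the exponents $-2K_1$ and $N$ are real regardless); it is the algebraic identity \eqref{a0carre}, $a_0^2(\la_0+\la_1)=-\frac2n(\la_0-\la_1)^2$, obtained from apolarity together with \eqref{ccf3} and \eqref{ccf4}, whose right-hand side is strictly negative because $\la_0\neq\la_1$. Second, your Step 2 leans on ``Codazzi with two eigenvalues gives spherical leaves,'' but here $S$ satisfies Codazzi with respect to the induced affine connection $\na$, not the Levi-Civita connection $\nablahat$ of $h$; to invoke N\"olker's theorem one must first compute the difference tensor $K=\na-\nablahat$ (using apolarity and the Codazzi equation for $h$), and the same apolarity computation is what later identifies the leaf immersion $g_2$ as an affine hypersphere (its restricted difference tensor is trace-free, so $g_2$ is proportional to its own affine normal). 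The difference tensor and the apolarity condition are entirely absent from your sketch, and they are not optional.
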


The hypersurfaces in Theorem~\ref{resultat} are similar to those described in
\cite{S}, where hypersurfaces with pointwise $SO(n-1)$-symmetry are studied.
The shape operator and difference tensor in that paper have
indeed the same form as the one we get under the assumptions of
Theorem~\ref{resultat}, the proof of which follows in part that of
\cite[Theorem 3.1]{S}.

{\bf Acknowledgements:} I am very grateful to Luc Vrancken for many valuable
discussions. 

\section{Preliminaries and notations}

Let us now very briefly recall some basic notions of affine geometry (see
\cite{NS} for details) and introduce the relevant notations.

Let $f:M\to \R^{n+1}$ be a non-degenerate immersion of an $n$-dimensional
oriented manifold $M$ into $\R^{n+1}$, with its Blaschke structure. Let us
denote by
\begin{itemize}
 \item $D$ the standard flat affine connection on $\R^{n+1}$,
 \item $\xi$ the affine normal of $f$,
 \item $\nabla$ the induced equiaffine connection on $M$,
 \item $h$ the equiaffine metric on $M$,
 \item $S$ the shape operator of $f$.
\end{itemize}

The above quantities are related by the following relations, for all vector
fields $X$ and $Y$ on $M$:
\begin{align*}
 D_X f_* Y&=f_*\nabla_X Y+h(X,Y)\xi,
 \\
 D_X \xi&=-f_* SX.
\end{align*}
(We will often drop the symbol $f_*$ in the sequel.)

The standard volume form $\det$ on $\R^{n+1}$ induces a volume form $\omega$
on $M$, defined as $\omega(X_1,\dots,X_n)=\det(X_1,\dots,X_n,\xi)$, and,
$\xi$ being the affine normal,
\[
 \omega^2(X_1,\dots,X_n)=\det(h_{ij}), \quad \mbox{where } h_{ij}=h(X_i,X_j).
\]

We will also denote by
\begin{itemize}
 \item $\nablahat$ the Levi-Civita connection of the metric $h$,
 \item $K$ the \emph{difference tensor}, defined by
 \[
  K(X,Y)=K_X Y=\nabla_X Y-\nablahat_X Y.
 \]
\end{itemize}
Recall \cite[Proposition~II.4.1]{NS} that
\begin{equation}
\label{KNablah}
h(K_X(Y),Z)=-\frac12(\nabla h)(X,Y,Z)
\end{equation}
and also that the apolarity condition $\nabla \omega=0$ can be expressed as $\tr K_X=0$ for
any vector field $X$ on $M$.

For all $u\in M$, the position vector $f(u)$ can be decomposed as 
\[
 f(u)=f_*Z_u+\rho(u)\xi_u,
\]
where $Z$ is a vector field on $M$ and $\rho$ the affine support function of
$f$.

We now recall the definition of the \emph{centre
map}, which has been introduced in \cite{FV}.

\begin{df}
 The \emph{centre map} of an immersion $f:M\to \R^{n+1}$ is the map $c:M\to
\R^{n+1}$
defined for all $u\in M$ by
\[
c(u)=f(u)-\rho(u)\xi_u=f_*Z_u.
\]
\end{df}

It follows that
\[
 c_*X=f_*(\id+\rho S)X-(X \rho)\xi,
\]
hence the centre map of an immersion $f$ is itself an immersion iff
\[
 \ker(\id+\rho S)\cap \ker d\rho=\{0\}.
\]

{}From now on, we will assume that the immersion $f$ is \emph{centroaffine}, i.e.\
that the position vector is everywhere transversal to the tangent space, and that
the centre map $c$ of $f$ is centroaffine, too, which amounts to
\begin{equation}
\label{ccentroaff}
\dim\<f_*Z_u^*\;,\;f_*(\id+\rho S)X-(X\rho)\xi_u\mid X\in T_uM\> = n+1,
\end{equation}
where we have used the notation $\ze=\irho Z$.

We are interested in immersions $f$ which are centroaffinely congruent to
their centre map $c$.

The following result has been established in \cite[Propositions~4.1,~4.2]{FV}:

\begin{prop}[Furuhata--Vrancken]
\label{congru}
Let $f:M\to \R^{n+1}$ be an affine immersion whose centre map $c$ is
a
centroaffine immersion. Then $f$ is centroaffinely congruent with $c$ iff
there exist a nowhere vanishing function $\rho$ and a vector field $\ze$ on $M$
satisfying the following system of equations for all vector fields $X,Y$ on $M$:
\begin{align}
 X(\rho)&=-\rho\, h(X,\ze)\label{ccf1},
 \\
 (\na_X S)Y&=h(X,\ze)SY+h(Y,\ze)SX-h(X,Y)S\ze\label{ccf2},
 \\
 (\na h)(X,Y,\ze)&=-2\irho h(X,Y)-2h(X,SY)-h(X,Y)h(\ze,\ze)\label{ccf3},
 \\
 \na_X \ze&=h(X,\ze)\ze+\irho X+SX\label{ccf4}.
\end{align}
\end{prop}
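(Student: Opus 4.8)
The plan is to read ``centroaffinely congruent'' as the existence of a fixed $A\in GL(n+1,\R)$ with $c=Af$, and to detect such an $A$ through the two adapted frames $(f_*X_1,\dots,f_*X_n,f)$ and $(c_*X_1,\dots,c_*X_n,c)$ along $M$. Before touching the congruence itself I would record two identities valid for \emph{any} centroaffine $f$: differentiating $f=f_*Z+\rho\xi$ with $D_Xf=f_*X$ and using the Blaschke equations, then splitting into tangential and transversal parts, gives
\[
\na_X Z=X+\rho SX,\qquad X\rho=-h(X,Z),
\]
and the second of these is exactly \eqref{ccf1} once $Z=\rho\ze$ is substituted. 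Feeding $\na_X Z=X+\rho SX$ and \eqref{ccf1} into $\na_X\ze=\na_X(\irho Z)$ then yields \eqref{ccf4} as a pure identity as well. Thus \eqref{ccf1} and \eqref{ccf4} carry no information about the congruence; the whole content sits in \eqref{ccf2} and \eqref{ccf3}, and my job reduces to showing that these two are equivalent to $c=Af$ for some constant $A$.

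For the forward direction I would assume $c=Af$ with $A$ constant, so $c_*X=Af_*X$ for all $X$. Since $\xi=\irho f-f_*\ze$, evaluating $A$ and using $c=f_*Z$ pins down the missing value of the frame map,
\[
A\xi=-f_*SZ+\irho(Z\rho)\,\xi.
\]
I would then differentiate $c_*Y=Af_*Y$, commute $A$ past $D_X$ (legal since $A$ is constant), and expand $A\bigl(f_*\na_XY+h(X,Y)\xi\bigr)=c_*(\na_XY)+h(X,Y)A\xi$. Comparing this with a direct computation of $D_X c_*Y$ from $c_*Y=f_*(\id+\rho S)Y-(Y\rho)\xi$, and matching $f_*$- and $\xi$-components, produces two equations; after substituting $\na_X(\rho SY)=(X\rho)SY+\rho(\na_XS)Y+\rho S\na_XY$ together with the identities above, the tangential match collapses to \eqref{ccf2} and the transversal match to \eqref{ccf3}.

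For the converse I would run the same computation backwards: assuming \eqref{ccf1}--\eqref{ccf4}, define a field of endomorphisms $A$ of $\R^{n+1}$ by prescribing its action on the Blaschke frame, $A(f_*X):=c_*X$ and $A\xi:=-f_*SZ+\irho(Z\rho)\xi$. A short computation using $\na_XZ=X+\rho SX$ checks that $Af=c$. The crux is then to prove $D_XA=0$, i.e.\ that $A$ is a parallel $(1,1)$-tensor; equivalently $D_X(AE_a)=A\,D_XE_a$ for the frame $E_a=(f_*X_i,\xi)$. On the $f_*X_i$ rows this equality is again exactly the pair \eqref{ccf2}--\eqref{ccf3}, while on the $\xi$ row it becomes two further relations which I expect to follow automatically from \eqref{ccf1}--\eqref{ccf4} (in particular one finds $\irho(Z\rho)$ to be constant). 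Since $D$ is flat, $D_XA=0$ forces $A$ to be a fixed element of $GL(n+1,\R)$ --- invertible because $A$ sends the basis $(f_*X_i,f)$ onto the spanning frame $(c_*X_i,c)$, using that both $f$ and $c$ are centroaffine --- so $c=Af$ is the desired congruence.

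I expect the main obstacle to be purely organisational: correctly identifying the value $A\xi=-f_*SZ+\irho(Z\rho)\xi$ that closes up the frame map, and then verifying in the converse that the extra $\xi$-row relations are consequences of \eqref{ccf1}--\eqref{ccf4} rather than independent constraints. Both hinge on repeated, careful use of the Blaschke structure equations together with the symmetry $h(SX,Y)=h(X,SY)$ of the shape operator, but involve no conceptual difficulty beyond the bookkeeping.
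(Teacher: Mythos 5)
The paper contains no internal proof of this proposition: it is imported from Furuhata--Vrancken \cite{FV} (their Propositions~4.1 and 4.2), so your attempt can only be compared with the cited source rather than with anything in the text. Your argument is correct, and I could verify all the steps you left as expectations. The opening observation is sound: for the canonical support function $\rho$ and $\ze=\irho Z$, decomposing $D_Xf=f_*X$ through $f=f_*Z+\rho\xi$ gives $\na_XZ=X+\rho SX$ and $X\rho=-h(X,Z)$, so \eqref{ccf1} and \eqref{ccf4} are identities of any centroaffine Blaschke immersion and the congruence content sits entirely in \eqref{ccf2}--\eqref{ccf3}. Your frame matching is also right: with $A\xi=-f_*SZ+\irho(Z\rho)\,\xi$, the tangential part of $D_X(c_*Y)=c_*\na_XY+h(X,Y)A\xi$ reduces (after dividing by $\rho$) exactly to \eqref{ccf2} and the transversal part to \eqref{ccf3}. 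In the converse, the two $\xi$-row relations needed for $D_XA=0$ do follow from the system: the transversal one amounts to the constancy of $\irho(Z\rho)=-\rho\,h(\ze,\ze)$, which one checks using \eqref{ccf1}, \eqref{ccf3} and \eqref{ccf4}, and the tangential one follows from \eqref{ccf2} evaluated at $Y=\ze$ together with \eqref{ccf4} and the $h$-self-adjointness of $S$; invertibility of $A$ then follows as you say because both $f$ and $c$ are centroaffine. Conceptually, your construction of a parallel endomorphism field is a hands-on re-proof of the uniqueness part of the fundamental theorem of centroaffine geometry (congruence iff the induced centroaffine connection and metric agree), which is the mechanism behind the cited result; what your route buys is self-containedness and the explicit isolation of \eqref{ccf1}, \eqref{ccf4} as vacuous conditions.

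One small caveat: the proposition as restated says ``there exist a nowhere vanishing function $\rho$ and a vector field $\ze$'', whereas your converse takes $\rho$ and $\ze$ to be the canonical support function and centre field. This matches the intended meaning (and is exactly how the paper applies the equations in Section~3), but a literal reading of the existential phrasing would require one extra remark ruling out, or reducing to, non-canonical pairs $(\rho,\ze)$ satisfying the system; as it stands your proof establishes the proposition in its intended form.
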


Using the apolarity condition, (\ref{KNablah}), and (\ref{ccf3}), we get
\begin{equation}
 \irho=-\frac 1 n \tr S-\frac 1 2 h(\ze,\ze)\label{rho},
\end{equation}
hence we can reformulate (\ref{ccf3}) as
\begin{equation}
 (\na h)(X,Y,\ze)=\frac 2 n \tr S\; h(X,Y)-2h(X,SY)\label{ccf3bis}.
\end{equation}

\section{Preliminary computations}

Let $f:M\to\R^{n+1}$ be an immersion whose centre map $c$ is itself a
centroaffine immersion, centroaffinely congruent to $f$.

We also assume that the metric $h$ induced by $f$ is positive definite. From
the Ricci equation, there exists a local $h$-orthonormal basis
$\{X_0,X_1,\dots,X_{n-1}\}$ of eigenvectors for the shape operator $S$.\\
If we denote by $\la_0,\dots,\la_{n-1}$ the corresponding eigenvalues, then
the Codazzi equation for $S$ in this basis reads:
\begin{equation}
  \label{CodaS}
  X_i(\la_j)X_j+\sum_{k=0}^{n-1} (\la_j-\la_k)\gam ijk X_k=
  X_j(\la_i)X_i+\sum_{k=0}^{n-1} (\la_i-\la_k)\gam jik X_k,
 \end{equation}
where $\gam ijk$ denote the Christoffel symbols of the equiaffine
connection $\na$ of $f$.

Writing $\ze=\sum_{i=0}^{n-1} a_iX_i$, we get from (\ref{ccf1}) that $X_i(\rho)=-\rho a_i$. By \cite[Proposition~4.3]{FV}, there exist constants $\nu_j$ such that $\rho\la_j=\nu_j$. Applying $X_i$ to this equality, we obtain
\begin{equation}
\label{lambda}
X_i(\la_j)=a_i\la_j.
\end{equation}

We now restrict to the quasi-umbilical case, i.e.\ when $S$ has two distinct
eigenvalues:
\begin{itemize}
\item $\la_0$, with eigenspace $\<X_0\>$,
\item $\la_1$, nonzero, with eigenspace $\<X_1,\dots,X_{n-1}\>$.
\end{itemize}
\vspace*{2mm}

For $i,j=1,\dots,n-1$, (\ref{CodaS}) now simplifies to 
\[
X_i(\la_1)X_j+(\la_1-\la_0)\gam ij0 X_0= X_j(\la_1)X_i+(\la_1-\la_0)\gam ji0
X_0.
\]
Therefore $X_i(\la_1)=0$ for $i=1,\dots,n-1$, so by (\ref{lambda}), $a_i=0$, i.e.
\[
\ze=a_0X_0.
\]

Let us now introduce the two constants
\[
 K_0=\frac{\la_0}{\la_0-\la_1}, \qquad  K_1=\frac{\la_1}{\la_0-\la_1}.
\]
\newpage

Using (\ref{CodaS}), the Codazzi equation for $h$, and the apolarity condition,
we get 
\begin{lem}
\label{gamma}
 For $i,j=1,\dots, n-1$, one has
\begin{align*}
 \nx 00&=-\frac{(n-1)}2 a_0(K_0+K_1) X_0,
 \\
 \nx 0i&=\frac{a_0}2 (K_0+K_1)X_i+\sum_{k\neq 0,i}\gam 0ik X_k,
 \\
 \nx i0&=a_0K_1X_i,
 \\
 \nx ij&=\dij a_0K_0X_0+\sum_{k=1}^{n-1}\gam ijk X_k.
\end{align*}
\end{lem}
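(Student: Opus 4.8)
The plan is to write the induced connection as $\na=\nablahat+K$, with $\nablahat$ the Levi-Civita connection of $h$ and $K$ the difference tensor, and to pin down the Christoffel symbols $\gam ijk=h(\nx ij,X_k)$ block by block, exploiting that $\{X_0,\dots,X_{n-1}\}$ is $h$-orthonormal. Three structural facts drive everything. First, since $\nablahat$ is metric, its symbols $\hatgam ijk$ are antisymmetric in the two upper/lower indices $j,k$, so $\hatgam ijj=0$ for all $i,j$. Second, (\ref{KNablah}) together with the Codazzi equation for $h$ makes $h(K_{X_i}X_j,X_k)$ totally symmetric in $i,j,k$, while $K_{X_i}X_j=K_{X_j}X_i$. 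Third, apolarity reads $\sum_k\gam ikk=0$ for each $i$. The rest is a matter of recording which projection each structure equation controls and filling the remaining directions with these symmetries.

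First I would extract the components carrying a single $X_0$ from the Codazzi equation (\ref{CodaS}). Taking the first index $0$ and the second $i\in\{1,\dots,n-1\}$, and using $\ze=a_0X_0$ with (\ref{lambda}), the $X_0$-, $X_i$- and remaining components decouple to give $\gam 0i0=0$, $\gam i0i=a_0K_1$ and $\gam i0k=0$ for $k\neq 0,i$. Next, applying (\ref{ccf2}) with $X=Y=X_i$ and then with $X=X_i,Y=X_j$ ($i\neq j$), the operator $\la_1\,\id-S$ on the left annihilates the $\la_1$-eigenspace, so only the $X_0$-direction survives and one reads off $\gam ii0=a_0K_0$ and $\gam ij0=0$. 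This already yields the entire $X_0$-row of $\nx ij$, namely the $\dij a_0K_0\,X_0$ term of the fourth formula, and the off-diagonal relations from (\ref{CodaS}) supply the vanishing of $\gam i0k$ ($k\neq 0,i$) needed in the third.

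The delicate point, which I expect to be the main obstacle, is the diagonal coefficient $\gam 0ii$ of $\nx 0i$: it lives in a direction that neither (\ref{CodaS}) nor (\ref{ccf2}) sees, since the relevant projections are killed by $\la\,\id-S$. Here I would use the symmetry of $K$. Because $\hatgam 0ii=0$, we have $\gam 0ii=h(K_{X_0}X_i,X_i)$, and total symmetry identifies this single number with both $h(K_{X_i}X_0,X_i)$ and $h(K_{X_i}X_i,X_0)$. Writing these out as $\gam i0i-\hatgam i0i=a_0K_1+\hatgam ii0$ and $\gam ii0-\hatgam ii0=a_0K_0-\hatgam ii0$ (using $\hatgam i0i=-\hatgam ii0$ from metricity) and adding, the unknown $\hatgam ii0$ cancels and leaves $2\gam 0ii=a_0(K_0+K_1)$, that is $\gam 0ii=\frac{a_0}{2}(K_0+K_1)$.

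It then remains to close the two pure-$X_0$ formulas. Applying (\ref{ccf2}) once more with $X=Y=X_0$ (now only $\la_0\,\id-S$ acts) gives $\gam 00k=0$ for $k\geq 1$; combined with $\gam 0i0=0$, the symmetry of $K$ and metricity, this also forces $\gam i00=0$, so that $\nx i0=a_0K_1X_i$ and $\nx 00=\gam 000X_0$. The value of $\gam 000$ follows from apolarity, $\gam 000=-\sum_{k=1}^{n-1}\gam 0kk=-\frac{n-1}{2}a_0(K_0+K_1)$, which is the first formula. Finally, the components $\gam 0ik$ ($k\neq 0,i$) and $\gam ijk$ ($i,j,k\geq 1$) are left undetermined by these equations and are simply gathered into the sums $\sum_{k\neq 0,i}\gam 0ik X_k$ and $\sum_{k=1}^{n-1}\gam ijk X_k$. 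The argument is thus largely bookkeeping of which Christoffel symbol each structure equation controls, the only genuinely non-mechanical ingredient being the symmetrisation that fixes $\gam 0ii$.
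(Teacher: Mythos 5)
Your proof is correct, and it follows what is essentially the route the paper indicates (the paper omits the proof, citing only (\ref{CodaS}), the Codazzi equation for $h$, and apolarity). All the individual steps check out: the extraction of $\gam 0i0=0$, $\gam i0i=a_0K_1$, $\gam i0k=0$ from (\ref{CodaS}); the symmetrisation via (\ref{KNablah}) and metricity that yields $\gam 0ii=\frac{a_0}{2}(K_0+K_1)$ without knowing $\hatgam ii0$; and the apolarity step for $\gam 000$. One point worth recording: you correctly identified that (\ref{CodaS}) alone gives only the symmetry $\gam ij0=\gam ji0$ (for $i,j\ge1$) and says nothing about $\gam 00k$, so the values $\gam ij0=\dij a_0K_0$ and $\gam 00k=0$ genuinely require an extra input --- you take it from (\ref{ccf2}), which is available under the standing hypothesis that $f$ is congruent to its centre map. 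The paper's one-line attribution does not mention (\ref{ccf2}), so your reconstruction in effect completes the stated ingredient list rather than deviating from it.
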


{}From (\ref{ccf3}) we have, for $i=1,\dots,n-1$,
\begin{align*}
 -a_0^2(K_0+K_1)&=\nabla h(X_i,X_i,\ze)=-2\irho-2\la_1-a_0^2,
 \\
 (n-1)a_0^2(K_0+K_1)&=\nabla h(X_0,X_0,\ze)=-2\irho-2\la_0-a_0^2,
\end{align*}
and from (\ref{ccf4}),
\begin{align*}
(\irho+\la_1)X_i&=\nabla_{X_i} a_0X_0=X_i(a_0)X_0+a_0^2K_1X_i,
\\
\bigl(a_0^2+(\irho+\la_0)\bigr)X_0&=\nabla_{X_0}
a_0X_0=\left(X_0(a_0)-\frac{n-1}2\,
a_0^2(K_0+K_1)\right)X_0,
\end{align*}
so we deduce that 
\begin{align}
 X_0(a_0)&=\frac{a_0^2}2,
 \\
 \irho+\la_1&=a_0^2K_1\label{rhola1},
 \\
 a_0^2(\la_0+\la_1)&=-\frac 2n(\la_0-\la_1)^2\label{a0carre}.
\end{align}

\begin{rem}
Equation (\ref{a0carre}) shows that we must have $\la_0+\la_1<0$, as stated in Theorem~\ref{resultat}.
\end{rem}

\begin{lem}
\label{immcaf}
Under the assumptions of Theorem~\ref{resultat}, the centre map of $f$ is a
centroaffine immersion.
\end{lem}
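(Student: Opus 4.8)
The plan is to verify the defining criterion (\ref{ccentroaff}) directly: I would write out the vector $f_*\ze$ together with the differentials $c_*X_j=f_*(\id+\rho S)X_j-(X_j\rho)\xi$ in the frame $\{f_*X_0,\dots,f_*X_{n-1},\xi\}$ of $\R^{n+1}$ and check that these $n+1$ vectors are linearly independent. Since (\ref{ccentroaff}) asks precisely that their span be $(n+1)$-dimensional, and this forces both $\operatorname{rank} c_*=n$ (so $c$ is an immersion) and $f_*\ze$ to be transversal to the image of $c_*$ (so $c$ is centroaffine), establishing the determinant is nonzero is all that is required.

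First I would record the components. Since $\ze=a_0X_0$, we have $f_*\ze=a_0\,f_*X_0$. From (\ref{ccf1}) one gets $X_j(\rho)=-\rho a_j$, so that $X_0(\rho)=-\rho a_0$ while $X_i(\rho)=0$ for $i=1,\dots,n-1$. Using that $X_0$ is an $S$-eigenvector for $\la_0$ and $X_1,\dots,X_{n-1}$ for $\la_1$, this yields $c_*X_0=(1+\rho\la_0)f_*X_0+\rho a_0\,\xi$ and $c_*X_i=(1+\rho\la_1)f_*X_i$ for $i=1,\dots,n-1$. Assembling these $n+1$ vectors against the frame $\{f_*X_0,\dots,f_*X_{n-1},\xi\}$, the vectors $c_*X_1,\dots,c_*X_{n-1}$ form a diagonal block on the $f_*X_1,\dots,f_*X_{n-1}$ directions with entries $1+\rho\la_1$, while the remaining pair $f_*\ze,\;c_*X_0$ spans the $\{f_*X_0,\xi\}$-plane with $2\times2$ determinant $a_0\cdot\rho a_0=\rho a_0^2$. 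Hence the full determinant equals, up to sign, $\rho\,a_0^2\,(1+\rho\la_1)^{n-1}$, and (\ref{ccentroaff}) holds exactly when this is nonzero.

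Finally I would check that each factor is nonzero. Since $\rho$ never vanishes, it suffices to see $a_0\neq0$ and $1+\rho\la_1\neq0$. The first is immediate from (\ref{a0carre}): because $\la_0\neq\la_1$, the right-hand side $-\tfrac2n(\la_0-\la_1)^2$ is strictly negative, forcing $a_0^2\neq0$. For the second, multiplying (\ref{rhola1}) by $\rho$ gives $1+\rho\la_1=\rho\,a_0^2K_1$ with $K_1=\la_1/(\la_0-\la_1)$; as $\la_1\neq0$ we have $K_1\neq0$, and combined with $\rho\neq0$ and $a_0\neq0$ already in hand, this is nonzero. Thus the determinant does not vanish and $c$ is a centroaffine immersion.

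I expect the only genuine subtlety to be this last verification, and in particular the point where the hypothesis $\la_1\neq0$ is indispensable: it is precisely what guarantees $1+\rho\la_1\neq0$, i.e.\ that $\id+\rho S$ stays invertible on the $(n-1)$-dimensional eigenspace, so that $c$ does not degenerate in those directions. Everything else is bookkeeping in the eigenframe.
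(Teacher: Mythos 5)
Your proposal is correct and follows essentially the same route as the paper: reduce condition (\ref{ccentroaff}) in the eigenframe to the nonvanishing of $\rho\,a_0^2(1+\rho\la_1)^{n-1}$ and then rule out $1+\rho\la_1=0$. The only (harmless) difference is the last step, where you conclude $1+\rho\la_1=\rho a_0^2K_1\neq 0$ directly from (\ref{rhola1}), whereas the paper argues by contradiction from (\ref{rho}) and (\ref{a0carre}); both use only facts established before the lemma, and both hinge on the hypothesis $\la_1\neq 0$ exactly as you point out.
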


\begin{proof}
We know from (\ref{ccentroaff}) that $c$ is a centroaffine immersion iff 
\[
\dim\<f_*Z_u^*\;,\;f_*(\id+\rho S)X-(X\rho)\xi_u\mid X\in T_u M\>=n+1
\]
iff the $n+1$ vectors
\[
a_0X_0\;,\;
(1+\rho\la_1)X_1\;,\;\dots\;,\;(1+\rho\la_1)X_{n-1}\;,\;(1+\rho\la_0)X_0+\rho a_0\xi
\]
are linearly independent iff $1+\rho\la_1\neq 0$.\\
If $\irho=-\la_1$, then we would get from (\ref{rho}) that $a_0^2=\frac 2 n
(\la_1-\la_0)$. This and (\ref{a0carre}) would imply that $\la_1=0$, a
contradiction.
\end{proof}
\newpage

A short computation using Lemma~\ref{gamma} leads to the following
\begin{lem}~
\label{warped}
\begin{itemize}
\item For $i,j\ge1$, $\nablahat_ {X_i}X_ j=\frac 12 \dij a_0X_0+\sum_{k=1}^{n-1}\hatgam ijk
X_k$
where $\hatgam ijk$ denote the Christoffel symbols of the
Levi-Civita connection $\nablahat$.
\item $\nablahat_{X_0}X_0=0$.
\item The difference tensor $K_{X_0}$ takes the form
\[
K_{X_0}=\begin{pmatrix}
\text{\footnotesize$-\frac{n-1}2\,a_0(K_0+K_1)$}&&0\qquad\dots\qquad0
\\[2mm]
0&&
\\
\vdots&&\frac12\,a_0(K_0+K_1)\id_{n-1}
\\[1mm] 
0&&
\end{pmatrix}.
\]
\end{itemize}
\end{lem}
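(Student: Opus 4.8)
The plan is to work entirely from the defining relation $\nablahat_X Y=\nabla_X Y-K_X Y$, reading the equiaffine covariant derivatives off Lemma~\ref{gamma} and computing the difference tensor through (\ref{KNablah}). Since $\{X_0,\dots,X_{n-1}\}$ is $h$-orthonormal we have $X_a\,h(X_b,X_c)=0$, so (\ref{KNablah}) together with the total symmetry of $\nabla h$ yields the convenient formula
\[
h(K_{X_a}X_b,X_c)=\frac12\bigl(h(\nx ab,X_c)+h(X_b,\nx ac)\bigr),
\]
into which I can substitute the four expressions of Lemma~\ref{gamma}. The whole statement then reduces to bookkeeping once $K_{X_0}$ is known, so I would establish the third bullet first.

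To identify $K_{X_0}$ I compute its $h$-components. Pairing against $X_0$ gives $h(K_{X_0}X_0,X_0)=h(\nx 00,X_0)=-\frac{n-1}2 a_0(K_0+K_1)$, while $h(K_{X_0}X_0,X_i)=0$ for $i\ge1$ because $\nx 00$ has no $X_i$-component and $\nx 0i$ has no $X_0$-component; hence $K_{X_0}X_0$ is the claimed multiple of $X_0$. For $i,j\ge1$ the formula yields $h(K_{X_0}X_i,X_i)=\frac12 a_0(K_0+K_1)$ at once, supplying the scalar block $\frac12 a_0(K_0+K_1)\,\id_{n-1}$. The one point not controlled by Lemma~\ref{gamma} is the vanishing of the off-diagonal entries $h(K_{X_0}X_i,X_j)=\frac12(\gam 0ij+\gam 0ji)$ for $i\neq j$; this symmetric combination of Christoffel symbols is where I expect the main obstacle. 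I would remove it by evaluating (\ref{ccf3bis}) on $X=X_i$, $Y=X_j$ with $\ze=a_0X_0$: since $h(X_i,X_j)=0$ and $SX_j=\la_1X_j$, the right-hand side vanishes, so $a_0(\nabla h)(X_0,X_i,X_j)=0$, and $a_0\neq0$ by (\ref{a0carre}) forces the off-diagonal terms to be zero. This pins down the matrix of $K_{X_0}$.

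With $K_{X_0}$ in hand the remaining two bullets are immediate. For the second, $\nablahat_{X_0}X_0=\nx 00-K_{X_0}X_0$ vanishes because the two terms coincide. For the first, I subtract the $X_0$-component of $K_{X_i}X_j$ from that of $\nx ij$: the same formula gives $h(K_{X_i}X_j,X_0)=\frac12\dij a_0(K_0+K_1)$ (using $\nx ij$ and $\nx i0=a_0K_1X_i$), whereas $\nx ij$ contributes $\dij a_0K_0X_0$, so the net $X_0$-coefficient of $\nablahat_{X_i}X_j$ is $\dij a_0\bigl(K_0-\frac12(K_0+K_1)\bigr)=\frac12\dij a_0(K_0-K_1)$. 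The identity $K_0-K_1=1$ collapses this to $\frac12\dij a_0$, exactly the warped-product term; the remaining components along $X_1,\dots,X_{n-1}$ are absorbed into the Levi-Civita Christoffel symbols $\hatgam ijk$, and there is no $X_0$-term when $i\neq j$, as asserted.
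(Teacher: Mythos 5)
Your computation is correct and is exactly the ``short computation using Lemma~\ref{gamma}'' that the paper leaves to the reader: decomposing $\nabla=\nablahat+K$, reading $K$ off \eqref{KNablah} with orthonormality, and using $K_0-K_1=1$ all check out. The only step you route through \eqref{ccf3bis}, the vanishing of the off-diagonal entries, also follows directly from the total symmetry of the cubic form, since $h(K_{X_0}X_i,X_j)=h(K_{X_i}X_j,X_0)=\frac12\bigl(h(\nx ij,X_0)+h(X_j,\nx i0)\bigr)=0$ for $i\neq j$ by Lemma~\ref{gamma} alone, but your argument is equally valid.
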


\begin{rem}
{}From Lemma~\ref{warped}, we see that the form of $K_{X_0}$, as well as that
of the shape operator $S$, is the same as in \cite{S}.
\end{rem}

\section{Warped products}

Let $(M_1,g_1)$ and $(M_2,g_2)$ be two Riemannian manifolds. Using the
appropriate projections, any vector $V$ tangent to $M_1\times M_2$ can be
decomposed as $V=V_1+V_2$, with $V_i$ tangent to $M_i$ ($i=1,2)$.

Recall that the \emph{warped metric} $g_1\times_{e^F} g_2$ on $M_1\times
M_2$ is defined by
\[
g(V,W)=g_1(V_1,W_1)+e^{2F}g_2(V_2,W_2),
\]
where $F$ is a function on $M_1\times M_2$ depending only on $M_1$.

The manifold $M_1\times M_2$, endowed with this metric, is a Riemannian
manifold, denoted by $M_1\times_{e^F} M_2$.

We will now use the following special case of a theorem of N\"olker \cite{No}:
\begin{prop}
 Let $(M,g)$ be a Riemannian manifold with Levi-Civita connection $\nablahat$, whose tangent bundle splits
into two orthogonal distributions $\mathcal N_1$ and $\mathcal N_2$.
Assume that there exists $H\in \mathcal N_1$ such that for all $X,Y\in\mathcal N_1$, $U,V\in\mathcal N_2$,
one has
\begin{align*}
 \nablahat_X Y&\in \mathcal N_1,
 \\
g(\nablahat_U V, Z)&=g(U,V) g(H,Z)\qquad\mbox{for all $Z\in \mathcal N_1$}.
\end{align*}
Assume further that $U(|H|)=0$ for all $U\in \mathcal N_2$. Then $(M,g)$ is
locally isometric to a warped product $M_1\times_{e^F} M_2$, with $M_i$
integral manifolds of $\mathcal N_i$.\\
Moreover, one has $\grad F=-H$.
\end{prop}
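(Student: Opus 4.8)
The plan is to verify directly that the hypotheses turn $(\mathcal N_1,\mathcal N_2)$ into a warped-product net and then to reconstruct the metric by hand. First I would check that both distributions are integrable. For $X,Y\in\mathcal N_1$ one has $[X,Y]=\nablahat_XY-\nablahat_YX\in\mathcal N_1$ by the first assumption, while for $U,V\in\mathcal N_2$ and $Z\in\mathcal N_1$ the second assumption gives $g([U,V],Z)=g(\nablahat_UV-\nablahat_VU,Z)=g(U,V)g(H,Z)-g(V,U)g(H,Z)=0$, so $[U,V]\in\mathcal N_2$. By Frobenius, $M$ is locally a product $M_1\times M_2$ with $\mathcal N_i=TM_i$, and from here on I would use product (commuting) coordinate vector fields, so that all brackets between $\mathcal N_1$- and $\mathcal N_2$-fields vanish.

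Next I would pin down the metric block by block. Since $\mathcal N_1$ is autoparallel, for $X,Y\in\mathcal N_1$ and $U\in\mathcal N_2$ one gets $U\bigl(g(X,Y)\bigr)=g(\nablahat_UX,Y)+g(X,\nablahat_UY)=0$, using $\nablahat_UX=\nablahat_XU$ and $g(\nablahat_XU,Y)=-g(U,\nablahat_XY)=0$; hence $g|_{\mathcal N_1}$ is constant along the fibres and descends to a metric $g_1$ on $M_1$. For the fibre block the umbilicity assumption yields the key identity
\[
X\bigl(g(U,V)\bigr)=g(\nablahat_XU,V)+g(U,\nablahat_XV)=-2\,g(U,V)\,g(H,X),
\]
which follows from $g(\nablahat_UX,V)=-g(X,\nablahat_UV)=-g(U,V)g(H,X)$ and its symmetric counterpart. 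This already exhibits $M$ as a twisted product whose fibres are umbilic with mean-curvature normal $H$.

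The heart of the proof is to upgrade this twisted product to a warped one, i.e.\ to produce a function $F$ with $\grad F=-H$. I would set $\omega:=-g(H,\cdot)$ and show it is closed, via $d\omega(A,B)=-\bigl(g(\nablahat_AH,B)-g(\nablahat_BH,A)\bigr)$. The $\mathcal N_2\times\mathcal N_2$ part vanishes because $g(\nablahat_UH,V)=-g(U,V)\,g(H,H)$ is symmetric in $U,V$; the $\mathcal N_1\times\mathcal N_1$ part vanishes automatically in the situation at hand, where $\mathcal N_1$ is spanned by a single vector field; and the mixed part reduces, using $\nablahat_XU\in\mathcal N_2$, to $g(\nablahat_UH,X)=U\bigl(g(H,X)\bigr)$, that is, to the $\mathcal N_1$-component of $\nablahat_UH$. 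This is exactly where the hypothesis $U(|H|)=0$ enters: writing $H=hE$ for a unit field $E$ spanning $\mathcal N_1$, one finds $(\nablahat_UH)^{\mathcal N_1}=U(h)E$, and $U(|H|)=0$ forces $U(h)=0$, so the mixed part also vanishes. Thus $\omega$ is closed, locally $\omega=dF$, and $g(\grad F,\cdot)=-g(H,\cdot)$ gives $\grad F=-H$, while $U(F)=\omega(U)=0$ shows $F$ depends only on $M_1$.

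Finally I would read off the warped metric. From the key identity and $X(F)=\omega(X)=-g(H,X)$ one computes $X\bigl(e^{-2F}g(U,V)\bigr)=e^{-2F}\bigl(X(g(U,V))-2X(F)g(U,V)\bigr)=0$, so $g_2:=e^{-2F}g|_{\mathcal N_2}$ is constant along $\mathcal N_1$ and descends to a metric on $M_2$; combined with $g|_{\mathcal N_1}=g_1$ this gives $g=g_1+e^{2F}g_2$, the warped product $M_1\times_{e^F}M_2$ with $\grad F=-H$. I expect the main obstacle to be precisely the mixed closedness step, namely showing $(\nablahat_UH)^{\mathcal N_1}=0$: this is what separates a generic twisted product from a warped one, and it is exactly the content the assumption $U(|H|)=0$ supplies.
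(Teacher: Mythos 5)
There is no proof in the paper to compare yours with: the proposition is quoted as a special case of N\"olker's decomposition theorem, with a citation to \cite{No}. Your self-contained verification is therefore a genuinely different route, and within its scope it is correct and well organized: integrability of both distributions, the block identities $U\bigl(g(X,Y)\bigr)=0$ and $X\bigl(g(U,V)\bigr)=-2g(U,V)g(H,X)$, the reduction of the whole problem to closedness of $\omega=-g(H,\cdot)$, and the final rescaling $g_2=e^{-2F}g|_{\mathcal N_2}$ are all sound. What it buys is transparency: one sees exactly where each hypothesis enters, which the citation to \cite{No} hides.

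The caveat is that you prove the statement only when $\dim\mathcal N_1=1$: you invoke this twice, once for the $\mathcal N_1\times\mathcal N_1$ part of $d\omega$ and, crucially, for the mixed part, where you write $H=hE$. That restriction is not in the statement, but it is (i) the only case the paper ever uses ($\mathcal N_1=\<X_0\>$, $H=\tfrac12 a_0X_0$), and (ii) not removable: with $U(|H|)=0$ in place of N\"olker's sphericality hypothesis $(\nablahat_U H)^{\mathcal N_1}=0$, the proposition as literally stated is false for $\dim\mathcal N_1\ge2$, because $U(|H|)=0$ only forces $(\nablahat_U H)^{\mathcal N_1}$ to be orthogonal to $H$, which kills nothing in the directions of $\mathcal N_1$ orthogonal to $H$. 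Concretely, on $\R^3$ take $g=dx^2+dy^2+e^{2\mu}dz^2$ with $\mu=x\cos z+y\sin z$, $\mathcal N_1=\<\partial_x,\partial_y\>$, $\mathcal N_2=\<\partial_z\>$: then $\mathcal N_1$ is autoparallel, $\mathcal N_2$ is umbilical with $H=-(\cos z\,\partial_x+\sin z\,\partial_y)\in\mathcal N_1$ and $|H|\equiv 1$, yet $\omega=\cos z\,dx+\sin z\,dy$ is not closed, so no warping function with $\grad F=-H$ can exist and the conclusion fails. So your closing remark that the mixed closedness step is exactly what $U(|H|)=0$ supplies is accurate only in the one-dimensional case; in general that step is precisely where the stated hypothesis is too weak. (Incidentally, the $\mathcal N_1\times\mathcal N_1$ part of $d\omega$ vanishes in any dimension, not just for a line field: by your key identity, $\omega(X)=\tfrac12\,X\bigl(\log g(U,U)\bigr)$ for any fixed coordinate field $U\in\mathcal N_2$, so $d\omega(X,Y)=\tfrac12[X,Y]\bigl(\log g(U,U)\bigr)=0$ for commuting $X,Y\in\mathcal N_1$; the mixed part is the only genuine obstruction.) It would strengthen the paper to state the proposition either with the sphericality hypothesis of \cite{No} or with $\dim\mathcal N_1=1$, and your proof would then be a complete, elementary substitute for the citation.
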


So from Lemma~\ref{warped}, we get that the Riemannian manifold $(M,h)$ is
locally isometric to a warped product $\R\times_{e^F} N^{n-1}$, with the
induced
metric
$h_N$ on $N$ given by $h_N(X_i,X_j)=e^{-2F}\dij$ $(i,j=1,\dots,n-1)$, and
$H=\frac 12\,a_0X_0$.

We now choose coordinates local coordinates $u_1,\dots,u_{n-1}$ on $N$, and a local coordinate $t$ on $\R$ such that $X_0=\del_t$.

\section{Proof of Theorem~\ref{resultat}: case $(n+2)\la_0+n\la_1\neq 0$}

We construct two maps $g_i:M\to\R^{n+1}$ ($i=1,2$) of the form $g_i=\a_i\xi+\be_i X_0$ such that
$D_{X_i}g_1=D_{X_0} g_2=0$.

A straightforward computation using Lemma~\ref{gamma} and (\ref{a0carre}) leads to 
\begin{lem}
 \label{g1}
 The map $g_1=a_0K_1\xi+\la_1X_0$ satisfies
 \begin{align*}
  D_{X_i}g_1&=0\qquad (i=1,\dots,n-1),
  \\
  D_{X_0}g_1&=-\frac{a_0}2\,\bigl((n+1)K_1+(n-1)K_0\bigr)\,g_1.
 \end{align*}
\end{lem}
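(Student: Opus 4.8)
The plan is to compute the two derivatives $D_{X_i}g_1$ and $D_{X_0}g_1$ directly from the structure equations, using the definition $g_1 = a_0K_1\xi + \la_1 X_0$. The key tools are the two Blaschke structure relations
\begin{align*}
 D_X Y &= \nabla_X Y + h(X,Y)\xi,
 \\
 D_X \xi &= -SX,
\end{align*}
together with Lemma~\ref{gamma} for the covariant derivatives $\nabla_{X_i}X_0$ and $\nabla_{X_0}X_0$, the eigenvalue relations $SX_0=\la_0X_0$, $SX_j=\la_1X_j$, and the fact that $K_0$, $K_1$, and the $\la_i$ are constant in the eigendirections we differentiate along.

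\textbf{The directions $X_i$, $i=1,\dots,n-1$.} First I would expand
\[
 D_{X_i}g_1 = X_i(a_0K_1)\,\xi + a_0K_1\,D_{X_i}\xi + X_i(\la_1)\,X_0 + \la_1\,D_{X_i}X_0.
\]
Here $a_0K_1$ is constant along $X_i$ (we already showed $X_i(\la_1)=0$ and $a_i=0$, so $X_i(a_0)=0$ as well), killing the first and third terms. Then $D_{X_i}\xi = -SX_i = -\la_1 X_i$, and from Lemma~\ref{gamma} one has $\nx i0 = a_0K_1X_i$, so $D_{X_i}X_0 = a_0K_1X_i + h(X_i,X_0)\xi = a_0K_1X_i$ (the basis is orthonormal, so $h(X_i,X_0)=0$). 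Substituting gives $-a_0K_1\la_1 X_i + \la_1 a_0 K_1 X_i = 0$, which is exactly the claim; the coefficients are rigged so that the $X_i$-components cancel and no $\xi$-component survives.

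\textbf{The direction $X_0$.} I would expand $D_{X_0}g_1$ the same way. Now $X_0(a_0K_1)=K_1\,X_0(a_0)=K_1\frac{a_0^2}2$ and $X_0(\la_1)=a_0\la_1$ from \eqref{lambda}. The term $D_{X_0}\xi=-\la_0X_0$ contributes $-a_0K_1\la_0X_0$, and $D_{X_0}X_0 = \nx 00 = -\frac{n-1}2 a_0(K_0+K_1)X_0$ from Lemma~\ref{gamma} (again the $\xi$-component $h(X_0,X_0)\xi$ is present from the structure equation, so care is needed here). Collecting all $X_0$-components and all $\xi$-components, the goal is to show the result is proportional to $g_1=a_0K_1\xi+\la_1X_0$ with the stated scalar $-\frac{a_0}2\bigl((n+1)K_1+(n-1)K_0\bigr)$. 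The main obstacle is the bookkeeping in this direction: one must verify that the $\xi$-coefficient and the $X_0$-coefficient of the computed expression are in the same ratio $a_0K_1:\la_1$ as in $g_1$ itself, which is where the identity \eqref{a0carre}, rewritten as $a_0^2(\la_0+\la_1)=-\frac2n(\la_0-\la_1)^2$ and hence expressible through $K_0+K_1 = \frac{\la_0+\la_1}{\la_0-\la_1}$, is needed to force the two coefficients into the required proportion. Once that ratio is confirmed, reading off the common scalar factor completes the proof.
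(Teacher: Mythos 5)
Your proposal is correct and is precisely the ``straightforward computation using Lemma~\ref{gamma} and (\ref{a0carre})'' that the paper invokes without writing out: expand $D_{X}g_1$ via the Gauss and Weingarten equations, use $\nx i0=a_0K_1X_i$ to get cancellation in the $X_i$-directions, and use (\ref{a0carre}) (equivalently $n(K_0+K_1)=\,$the right multiple) to match the $\xi$- and $X_0$-coefficients in the $X_0$-direction; I checked that both coefficients do come out in the ratio $a_0K_1:\la_1$ with the stated scalar. The only cosmetic slip is attributing $X_i(a_0)=0$ directly to $a_i=0$ — it actually follows from comparing the $X_0$-components in the equation $\nabla_{X_i}(a_0X_0)=(\irho+\la_1)X_i$ derived from (\ref{ccf4}) — but the fact itself is established in the paper and the argument stands.
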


Hence, there exist a function $c(t)$ and a constant vector $C_0$ such that
$g_1(t)=c(t)C_0$.

\begin{lem}
 \label{g2}
 There exists a map $g_2=\a_2\xi+\be_2 X_0$ such that $D_{X_0}
g_2=0$.
\end{lem}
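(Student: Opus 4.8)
The plan is to mimic the construction of Lemma~\ref{g1}, but now seeking a combination $g_2=\a_2\xi+\be_2X_0$ that is \emph{parallel in the $X_0$ direction} rather than in the $X_i$ directions. Since the eigenvalue $\la_1$ and the coefficient $a_0$ are functions of $t$ alone (they are constant along the fibres $N$, as $X_i(\la_1)=0$ and $a_i=0$), the natural ansatz is $g_2=\a_2(t)\xi+\be_2(t)X_0$, and the requirement $D_{X_0}g_2=0$ becomes a first-order linear ODE system in $t$ for the pair $(\a_2,\be_2)$.

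First I would compute $D_{X_0}g_2$ explicitly. Using the structure equations $D_{X_0}\xi=-S X_0=-\la_0 X_0$ and $D_{X_0}X_0=\nx00+h(X_0,X_0)\xi=\nx00+\xi$, together with the value $\nx00=-\tfrac{(n-1)}2a_0(K_0+K_1)X_0$ from Lemma~\ref{gamma}, I obtain
\begin{align*}
D_{X_0}g_2&=X_0(\a_2)\xi+\a_2 D_{X_0}\xi+X_0(\be_2)X_0+\be_2 D_{X_0}X_0
\\
&=\bigl(X_0(\a_2)+\be_2\bigr)\xi+\Bigl(-\a_2\la_0+X_0(\be_2)-\tfrac{(n-1)}2a_0(K_0+K_1)\be_2\Bigr)X_0.
\end{align*}
Setting both components to zero yields the coupled system
\begin{align*}
X_0(\a_2)&=-\be_2,
\\
X_0(\be_2)&=\a_2\la_0+\tfrac{(n-1)}2a_0(K_0+K_1)\be_2.
\end{align*}
Since $X_0=\del_t$, this is a genuine linear system of ODEs in $t$ with coefficients determined by the known functions $\la_0(t)$ and $a_0(t)$; standard existence theory for linear ODEs then guarantees a (nonzero) solution $(\a_2,\be_2)$, establishing the lemma. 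To keep $g_2$ a genuine combination (not identically zero), I would choose nontrivial initial data.

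The main obstacle I anticipate is not the existence of the solution per se, but verifying that the coefficient functions are well-behaved — in particular, that $\la_0$ and $a_0$ are explicit functions of $t$ so that the ODE is meaningful, and that the resulting $g_2$ is suitable for the subsequent geometric argument (namely that $g_2$ descends to a map on the fibre $N$ giving the affine hypersphere in \eqref{f}). I expect that, just as the solution of Lemma~\ref{g1} took the separated form $g_1(t)=c(t)C_0$ because the right-hand side was a scalar multiple of $g_1$, here the two-dimensional system will need to be integrated more carefully; the key check is that a nonvanishing solution exists throughout the coordinate domain, which follows from the linearity of the system once the coefficients $\la_0(t),a_0(t)$ are seen to be smooth. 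The explicit dependence of $a_0$ on $t$ can be extracted from $X_0(a_0)=\tfrac{a_0^2}2$, which integrates to $a_0=-2/(t-t_0)$ after a suitable choice of origin, and then $\la_0$ follows from $a_0^2(\la_0+\la_1)=-\tfrac2n(\la_0-\la_1)^2$ together with $\rho\la_j=\nu_j$.
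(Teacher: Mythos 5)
Your proof is correct and establishes the lemma as literally stated, but it takes a genuinely different route from the paper. You treat $D_{X_0}g_2=0$ as a generic $2\times2$ first-order linear system for $(\a_2,\be_2)$; your computation of the two components is right, and since the coefficients depend on $t$ only, existence of a nowhere-vanishing solution does follow. The paper instead produces one \emph{specific} solution: it observes that the vector $\phi=a_0K_0X_0+\xi$ arising as the normal term in the Gauss formula $D_{X_i}X_j=f_*\nabla^N_{X_i}X_j+\dij\phi$ for the fibres satisfies $D_{X_0}\phi=a_0K_0\,\phi$ (a consequence of (\ref{a0carre})), so a single scalar integrating factor with $X_0(\a_2)=-a_0K_0\a_2$, i.e.\ $\a_2=n_2t^{2K_0}$, already gives $D_{X_0}(\a_2\phi)=0$; the paper's $g_2=\a_2\phi$ is thus the particular solution of your system with $\be_2=\a_2a_0K_0$. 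What that choice buys --- and what your generic solution does not automatically provide --- is the companion identity $D_{X_i}g_2=\eta(t)X_i$ with $\eta=-\a_2\zeta$, which is exactly what the subsequent proposition uses to show that $g_2$ restricts to an immersion of $N$ as a proper affine hypersphere in a hyperplane. The general solution of your system is spanned by $\a_2\phi$ and the constant vector $C_0=g_1/c$ of Lemma~\ref{g1}, and since the ``constants'' of integration may vary with $\vec u$, an arbitrary solution need not satisfy $D_{X_i}g_2\propto X_i$. You flag this yourself as an anticipated obstacle, so nothing is wrong with the proof of the bare statement, but to connect with the rest of the argument you would still need to single out the $\phi$-direction, which is what the paper's proof does from the start.
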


\begin{proof}
 Let us denote by $\nablan$ the restriction of $\nabla$ to
$\<X_1,\dots,X_{n-1}\>$.

For $i,j=1,\dots,n-1$, we have from Lemma~\ref{gamma}
\[
 D_{X_i} X_j=f_*\nablan_{X_i} X_j+\dij(a_0K_0X_0+\xi).
\]
The map $\phi=a_0K_0X_0+\xi$ satisfies 
\[
\phi_*X_i=D_{X_i} \phi=(a_0^2K_0K_1-\la_1)X_i=-\zeta(t)X_i
\]
and from (\ref{a0carre}) we also have
\[
\phi_*X_0=D_{X_0} \phi=a_0K_0\phi.
\]
Hence we can find a function $\a_2(t)$ with $D_{X_0}(\a_2\phi)=0$. This
function has to satisfy 
\begin{equation}
\label{alpha2}
X_0(\a_2)=-a_0K_0\a_2,
\end{equation}
so for the map $g_2=\a_2\phi$, we get
$D_{X_0} g_2=0$ and $D_{X_i} g_2=\eta(t)X_i$, with
$\eta=\a_2(a_0^2K_0K_1-\la_1)=-\a_2\zeta$.
\end{proof}

Notice for further use that by (\ref{a0carre}), 
\begin{equation}
 \label{zeta}
 \zeta=\frac{\la_1}n\left(\frac{(n+2)\la_0+n\la_1}{\la_0+\la_1}\right),
\end{equation}
hence the condition in the title of this section reads $\zeta\neq0$.

\begin{prop}
When $\zeta\neq 0$, the map $g_2$ is an immersion of $N$ as a proper affine
hypersphere in some
hyperplane of $\R^{n+1}$.
\end{prop}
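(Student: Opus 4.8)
The plan is to realize $g_2$ as a centroaffine immersion of $N$ whose transversal field is its own position vector, and then to upgrade this to a \emph{proper} affine hypersphere by verifying an apolarity condition inherited from $f$. Since $D_{X_0}g_2=0$, the map $g_2$ factors through the second factor of the warped product $\R\times_{e^F}N$ and should be read as $g_2:N\to\R^{n+1}$; consequently the $t$-dependent scalars $a_0,\a_2,\eta,\zeta$ are all \emph{constant} along each slice $N\times\{t_0\}$, which is the only regime relevant to the affine structure of $g_2$. First I would check that $g_2$ is an immersion: $\a_2$ solves the linear ODE (\ref{alpha2}), so it is nowhere zero, whence $\eta=-\a_2\zeta$ is nowhere zero precisely because $\zeta\neq0$. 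By Lemma~\ref{g2}, $g_{2*}X_i=\eta X_i$ for $i=1,\dots,n-1$, so $g_{2*}$ has rank $n-1$ and $g_2$ immerses the $(n-1)$-manifold $N$. Writing $g_2=\a_2(a_0K_0X_0+\xi)$, its nonzero $\xi$-component shows that the position vector $g_2$ is transversal to the tangent plane $\<X_1,\dots,X_{n-1}\>$, so $g_2$ is centroaffine with transversal field $g_2$ itself.

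Next I would show $g_2(N)$ lies in a hyperplane through the origin. Differentiating $g_{2*}X_i=\eta X_i$ once more, using the Gauss formula $D_{X_j}X_i=\dij(a_0K_0X_0+\xi)+\sum_k\gam jik X_k$ from Lemma~\ref{gamma} together with $\eta/\a_2=-\zeta$ and $\phi=g_2/\a_2$, one obtains on each slice
\begin{equation*}
 D_{X_j}(g_{2*}X_i)=-\zeta\,\dij\,g_2+\sum_{k=1}^{n-1}\gam jik\,g_{2*}X_k\qquad(i,j=1,\dots,n-1).
\end{equation*}
Hence the $n$-plane field $\<g_2,g_{2*}X_1,\dots,g_{2*}X_{n-1}\>$ is invariant under $D$ along $N$; since $D$ is flat, this plane is locally a fixed linear subspace $V\cong\R^n$, and $g_2(N)\subset V$. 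The same display reads off the centroaffine data of $g_2$ in $V$: the induced connection is the restriction $\nablan$ (Christoffel symbols $\gam jik$), and the centroaffine metric is $-\zeta\,\dij$, which is nondegenerate exactly because $\zeta\neq0$.

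Finally I would promote this to a proper affine hypersphere via apolarity. On a fixed slice the centroaffine metric $-\zeta\,\dij$ is a constant multiple of the slice metric $h_N=e^{-2F}\dij$, so the two share the same Levi-Civita connection, namely the one with symbols $\hatgam jik$ of Lemma~\ref{warped}. Thus the difference tensor of $g_2$ is $K^{g_2}_{X_j}X_i=\sum_k(\gam jik-\hatgam jik)X_k$, whose trace $\sum_i(\gam jii-\hatgam jii)$ vanishes: this is exactly the apolarity $\tr K_{X_j}=0$ of $f$, once one notes via $K_{X_j}X_0=K_{X_0}X_j$ (Lemma~\ref{warped}) that the $X_0$-row contributes nothing. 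Therefore the centroaffine normalization of $g_2$ is equiaffine, so $g_2$ is the affine normal up to a constant; since $D_Xg_2=g_{2*}X$ gives centroaffine shape operator $-\id$, the affine shape operator is a nonzero constant multiple of $\id$, and $g_2$ is a proper affine hypersphere in $V$ centered at the origin.

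The main obstacle I anticipate is this last step: correctly matching the Levi-Civita connection of $-\zeta\,\dij$ with $\hatgam jik$ so that the ambient apolarity of $f$ transfers verbatim to $g_2$. The delicacy is that $\zeta,\a_2,\eta$ genuinely depend on $t$, so one must argue on a single slice $N\times\{t_0\}$—where they are constants, and where $-\zeta\,\dij$ and $h_N$ are proportional—before concluding; the fact that $g_2$ factors through $N$ is precisely what legitimizes reducing everything to that slice.
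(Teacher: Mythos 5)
Your proposal is correct and follows essentially the same route as the paper: compute $D_{X_j}D_{X_i}g_2=g_{2*}(\nabla^N_{X_j}X_i)-\delta_{ij}\zeta g_2$ to see that $g_2$ immerses $N$ into a fixed hyperplane with the position vector as equiaffine transversal, and then invoke the apolarity inherited from $f$ (via Lemmas~\ref{gamma} and~\ref{warped}) to conclude that $g_2$ is, up to a constant, its own affine normal. Your version merely spells out two points the paper leaves implicit, namely why $\eta$ is nowhere zero and why the trace of the difference tensor of $g_2$ reduces to $\sum_i(\Gamma_{ji}^i-\widehat\Gamma_{ji}^i)=0$.
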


\begin{proof}
 We have
\[
\begin{aligned}
 D_{X_j}D_{X_i}\ g_2&=\eta(t) D_{X_j} X_i
 \\
  &=\eta(t)[f_*\nablan_{X_j} X_i+\dij\phi]
 \\
  &=g_{2*}(\nablan_{X_j} X_i)+\dij \eta(t)\phi
 \\
  &=g_{2*}(\nablan_{X_j} X_i)-\dij \zeta(t) g_2.
\end{aligned}
\]
When $\zeta\neq 0$, $g_2$ can be viewed
as an immersion of $N$ into $\R^{n+1}$. The above computation shows that
$g_2$ actually lies in some fixed hyperplane of $\R^{n+1}$, namely $\mathcal
H=\<X_1(p), X_2(p),\dots,X_{n-1}(p),g_2(p)\>$ for some given point $p$. Hence
$g_2$ is an immersion of $N$ into $\mathcal H$, and the position vector is
transversal to $g_{2*}(N)$. From Lemma~\ref{gamma}, we see that the
difference
tensor $K^N$ satisfies the apolarity condition, hence $g_2$ is (possibly up to
a constant factor) the affine normal of $g_2$, which is therefore a
proper affine hypersphere in $\mathcal H$.
\end{proof}

\begin{rem}
When $\zeta\neq 0$, the vector field $g_1$ is transversal to $\mathcal H$.
\end{rem}

\begin{proof} One has
\begin{align*}
  a_0K_1\xi+\la_1 X_0&=\frac{\la}{\a_2}g_2+\sum_{i=1}^{n-1} a_i X_i
  \\
  \mbox{iff}\quad a_0K_1\xi+\la_1 X_0&=\la(\xi+a_0K_0X_0)+\sum_{i=1}^{n-1}a_i
X_i
\end{align*}
iff $a_i=0$ for $i=1,\dots, n-1$, $\la=a_0K_1$, and $\la_1=a_0^2K_0K_1$, i.e.\
$\zeta=0$.
\end{proof}

{}From $X_0 (a_0)=\frac{a_0^2}2$, we get $a_0=-\frac 2 t$.
Hence, (\ref{g1}) gives
\begin{align*}
 c'&=-\frac{a_0}2\,\bigl((n+1)K_1+(n-1)K_0\bigr)\,c
 \\
 &=\frac 1t\,\bigl((n+1)K_1+(n-1)K_0\bigr)\,c,
\end{align*}
whence 
\begin{equation}
 c(t)=n_1t^{(n+1)K_1+(n-1)K_0}\label{c}
\end{equation}
for some constant $n_1$.

Solving
\[
\left\{
\begin{aligned}
g_1&=a_0K_1\xi+\la_1X_0,
\\
g_2&=\a_2\xi+\a_2a_0K_0X_0
\end{aligned}
\right.
\]
for $X_0$, we get
\begin{equation}
X_0=\frac{a_0K_1}{\eta}g_2+\frac c{\zeta} C_0.
\end{equation}

Hence $\frac{\del f}{\del t}=\frac{a_0K_1}{\zeta}g_2+\frac c{\zeta}C_0$,
which, after an appropriate affine transformation (putting $C_0$ in the
$e_{n+1}$-direction), gives the following expression for $f$:
\[
 f(t,\vec u)=\bigl(\ga_1(t)g_2(\vec u),\ga_2(t)\bigr),
\]
where $\vec u=(u_1,\dots,u_{n-1})$ and
\[
 \ga_1(t)=\int\frac{a_0K_1}{\eta}(t)\,dt, \qquad\qquad
\ga_2(t)=\int\frac{c(t)}{\zeta(t)}\,dt.
\]

Let us now explicitly compute $\ga_1$ and $\ga_2$. 

By (\ref{lambda}), we know that the eigenvalues $\la_i$ only depend on
$t$, with
$\la_i'=-\frac 2 t \la_i$, hence $\la_i=\frac{l_i}{t^2}$ with $l_i$ constant.

Since $\zeta=\la_1-a_0^2K_0K_1$, we get $\zeta=\frac{\zeta_0}{t^2}$, where by
(\ref{zeta}), $\zeta_0=\frac{l_1}n\left(\frac{(n+2)l_0+nl_1}{l_0+l_1}\right)$.

Using (\ref{c}), we have
\[
 \frac c{\zeta}(t)=\frac {n_1}{\zeta_0}t^{(n-1)K_1+(n+1)K_0}.
\]
Notice that $(n-1)K_1+(n+1)K_0\neq -1$. Otherwise, $K_0+K_1=-\frac 2 n$,
hence, by (\ref{a0carre}), $a_0^2=\la_0-\la_1$, i.e.\ $a_0^2K_1=\la_1$. But
by (\ref{rhola1}), $a_0^2K_1=\irho+\la_1$, a contradiction.
So we get
\[
\ga_2(t)=\frac{n_1}{\zeta_0}\,\frac{t^N} N,
\]
where $N=(n-2)K_1+(n+2)K_0\neq 0$.

On the other hand, $\eta=-\a_2\zeta$, where, from (\ref{alpha2}),
$\a_2=n_2t^{2K_0}$, with $n_2$ constant.
Hence $\eta=-n_2\zeta_0t^{2(K_0-1)}=-n_2\zeta_0t^{2 K_1}$.

It is easy to check that $\eta'=-a_0K_1\eta$, hence $\ga_1=\frac 1
\eta=-\frac1{n_2\zeta_0}t^{-2K_1}$. So we have
\begin{equation}
 \label{explicite}
 f(t,\vec u)=\left(-\frac1{n_2\zeta_0}t^{-2K_1}g_2(\vec u)\;,\;
\frac{n_1}{\zeta_0} \frac{t^N}N\right).
\end{equation}

Let us now check that the hypersurfaces described in (\ref{explicite}) do indeed satisfy
the assumptions of Theorem~\ref{resultat}.

One has 
\begin{align*}
 \del_t&=(\ga_1' g_2,\ga_2'),
 \\
 \del_{u_i}&=\bigl(\ga_1 g_{2*}(\del_{u_i}),0\bigr),
 \\
 \xi&=\frac 1{\a_2} g_2-a_0K_0\del_t,
\end{align*}
so that
\begin{align*}
 D_{\del_t}\del_t&=\bigl(\frac{\ga_2''}{\ga_2'}+a_0 K_0\bigr)
\del_t+\xi,
\\
D_{\del_t}\del_{u_i}&=\frac{\ga_1'}{\ga_1}\del_{u_i},
\\
D_{\del_{u_j}}\del_{u_i}&=\nabla^N_{\del_{u_j}}\del_{u_i}+e^{2F}h_N(\del_{u_i}
, \del_ { u_j })(a_0K_0\del_t+\xi).
\end{align*}
Hence
\begin{align*}
 h(\del_t,\del_t)&=1,
 \\
 h(\del{u_i},\del{u_j})&=e^{2F}h_N(\del{u_i},\del{u_j}),
 \\
 h(\del_t,\del{u_i})&=0,
\end{align*}
with $h_N$ the positive definite metric induced on $N$ by $g_2$.

We see that $h$ is positive definite and that $\det
h= e^{2(n-1)F} \det h_N$. 
\\
On the other hand,
\begin{align*}
\omega(\del_t\;,\;\del{u_1}\;,\;\dots\;,\;\del_{u_{n-1}})
&=\det\left(\del_t\;,\;\del{u_1}\;,\;\dots\;,\;\del_{u_{n-1}}\;,\;\frac
1{\a_2}g_2-a_0K_0\del_t\right)
\\
&=\det\left(\del_t\;,\;\del{u_1}\;,\;\dots\;,\;\del_{u_{n-1}}\;,\;\frac
1{\a_2}g_2\right)
\\
&=\begin{vmatrix}
&&&&
\\
\frac{a_0K_1}\eta\,g_2&\frac1\eta\,g_{2*}(\del_{u_1})&\dots&\frac1\eta\,g_{2*}
(\del_{u_{n-1}})&\frac1{\a_2}\,g_2
\\
&&&&
\\
\dfrac c\zeta&0&\dots&0&0
\end{vmatrix}
\\[2mm]
&=(-1)^{n+2}\frac c{\zeta}\;
\det\left(\frac 1{\eta}\,g_{2*}(\del_{u_1})\,,\,\dots\,,\,\frac1{\eta}
g_{2*}(\del_{u_{n-1}})\,,\,\frac 1{\a_2}g_2\right)
\\[2mm]
&=(-1)^n\frac c{\a_2\zeta\eta^{n-1}}\;
\det\Bigl(g_{2*}(\del_{u_1})\;,\;\dots\;,\;g_{2*}(\del_{u_{n-1}})\;,
\;g_2\Bigr)
\\[2mm]
&=(-1)^{n+1}\frac c{\eta ^n}\, \sqrt {\det h_N}.
\end{align*}
For $\xi$ to be the affine normal, we have to check that
\begin{equation}
\label{volume}
\omega^2(\del_t,\del_{u_1},\dots,\del_{u_{n-1}})=\det
h=e^{2(n-1)F}\det h_N.
\end{equation}
Since $\grad f=-\frac{a_0}2\,X_0$, $e^F=e_0 t$
for some constant $e_0$, and (\ref{volume}) reads:
\begin{equation}
\label{condition}
 \frac{c^2}{\eta^{2n}}=\frac{n_1^2}{n_2^{2n}\zeta_0^{2n}}\,t^{2(n-1)}=\,e_0^{
2(n-1)}t^{2(n-1)},
\end{equation}
which does hold after adjusting the integration constants $n_1, n_2, e_0$.

A straightforward computation shows that $D_{X_0}\xi=-\la_0 X_0$ and
$D_{X_i}\xi=-\la_1 X_i$ for $i=1,\dots,n-1$.

Let us now check that $f$ is indeed congruent to its centre
map $c_f$. By
definition, $c_f=f_*Z=\rho f_*\ze$. From (\ref{ccf1}) we deduce
$\rho=\rho_0t^2$, with $\rho_0$ a constant. So
\begin{align*}
 c_f&=-2\rho_0tX_0
 \\
 &=\left(\frac{4\rho_0K_1}{\eta} g_2,-2\rho_0\frac{n_1}{\zeta_0} t^N\right).
\end{align*}
On the other hand, by (\ref{explicite})
\[
f=\left(\frac 1 {\eta}g_2,\frac{n_1}{\zeta_0}\frac{t^N}N\right),
\]
hence $c_f=Af$, with
\[
A=\begin{pmatrix}
&&0
\\
&4\rho_0K_1\,\id_n&\vdots
\\
&&0
\\[1.5mm]
&0\quad\dots\quad0&-2\rho_0N
\end{pmatrix}.
\]

\section{Proof of Theorem~\ref{resultat}: case $(n+2)\la_0+n\la_1=0$}

In this case, we have $\zeta=0$ (cf.\ (\ref{zeta})).

As in the case $\zeta\neq 0$, we have, for $i,j=1,\dots,n-1$,
\begin{equation}
 \label{derfij}
 D_{X_i} X_j=f_*\nablan_{X_i} X_j+\dij(a_0K_0X_0+\xi).
\end{equation}

The map $\phi=a_0K_0X_0+\xi$ satisfies 
\[
 \phi_*X_i=D_{X_i} \phi=0
\]
and 
\[
 \phi_*X_0=D_{X_0} \phi=a_0K_0\phi=-\frac 2t K_0\phi,
\]
so that $\phi=t^{-2K_0} \phi_0$ with $\phi_0$ a constant vector.

Since $(n+2)K_0+nK_1=0$, one has
\begin{equation}
\label{derftt}
 D_{X_0}X_{0}=\frac{a_0}2(K_0+K_1)X_0+\phi,
\end{equation}
hence $f(t,\vec u)$ takes the form
\[
 f(t,\vec u)=g_0(\vec u)\ga_1(t)+g_1(\vec u)1+\a(t)\phi_0
\]
and
\[
 X_0=\ga_1'(t)\,g_0(\vec u)+\a'(t)\phi_0.
\]

{}From $D_{\del{u_i}} X_0=a_0K_1\del{u_i}$, we deduce
\begin{itemize}
\item $\ga_1'=a_0K_1\ga_1$, i.e.\ $\ga_1=\ga_0t^{-2K_1} $($\ga_0$
constant),
\item $g_1(\vec u)$ is constant,
\end{itemize}
and $\a(t)\phi_0$ is a solution of (\ref{derftt}) iff $\a(t)$ satisfies
\[
\a''(t)=\frac{a_0}2(K_0+K_1)\a'(t)+t^{-2K_0},
\]
i.e.
\[
 \a''(t)+\frac1t(K_0+K_1)\a'(t)=t^{-2K_0}.
\]
The general solution to this equation is
\[
 \a(t)=-\frac{t^{-2K_1}}{2K_1}\log
t-B\frac{t^{-2K_1}}{2K_1}+C,
\]
where $B$ and $C$ are constants. Hence, up to a translation, 
\begin{equation}
\label{fnul}
f(t,\vec u)=\ga_1 g_0(\vec u)-\frac{t^{-2K_1}}{2K_1}(\log t+B)\phi_0
\end{equation}
and
\[
X_0=a_0K_1\ga_1\,g_0(\vec u)+t^{-(K_0+K_1)}(\log t +B)\phi_0.
\]
\newpage

We now show that $g_0$ is an improper affine hypersphere in some hyperplane
of $\R^{n+1}$.

We first show that the $n+1$ vectors $g_0(\vec u),\del_{u_i}g_0(\vec
u),\phi_0$ 
$(i=1,\dots,n-1)$ are linearly independent.
Indeed, denoting by $(h_{ij})=h(\del_{u_i},\del_{u_j})$ ($i,j=0,\dots,n-1$),
we know that
$\det(\del_t,\del_{u_1},\dots,\del_{u_i},\xi)=\sqrt{\det (h_{ij})}\neq 0$.

Since $\xi=\phi-a_0K_0\del_t$ and $\del_{u_i}=\ga_1\del_{u_i}g_0$,
\begin{align*}
\det(\del_t,\del_{u_1},\dots,\del_{u_{n-1}},\xi)
&=\det(\del_t\,,\,\ga_1\del_{u_1}g_0\,,\,\dots\,,\,\ga_1 \del_{u_{n-1}}g_0\,,\,\phi)
\\
&=\det(a_0K_1\ga_1g_0\,,\,\ga_1\del_{u_1}g_0\,,\,
\dots\,,\,\ga_1\del_{u_{n-1}}g_0\,,\,t^{-2K_0}\phi_0),
\end{align*}
hence $\det(g_0,\del_{u_1}g_0, \dots ,\del_{u_{n-1}}g_0,\phi_0)\neq 0$.

Let us now fix a point $p_0$ in $N$ and choose a frame in $\R^{n+1}$
such that
\begin{align*}
g_0(p_0)=&(1,0,\dots,0),
\\[1mm]
\del_{u_i}g_0(p_0)=&(0,\dots,1,\dots,0),\qquad i=1,\dots,n-1,
\\[-1mm]
&\text{\footnotesize($1$ in $(i+1)$st position)}
\\[1mm]
\phi_0=&(0,\dots,0,\varphi_0)\qquad\text{($\varphi_0$ a constant)}.
\end{align*}
{}From (\ref{derfij}),
\begin{equation}
\label{dergoij}
D_{\del_{u_j}}D_{\del_{u_i}}g_0=g_{0*}\nabla^N_{\del_{u_j}}\del_{u_i}
+h^N(\del_ { u_i},\del_{u_j})\phi_0.
\end{equation}
This equation has a unique solution satisfying the initial conditions
$g_0(p_0)$ and $\del_{u_i}g_0(p_0)$. Looking at the first
component of (\ref{dergoij}), we see that $g_0$ lies in the hyperplane
$\mathcal H\equiv x_0=1$.

A straightforward computation shows that, since $(n-2)K_0+nK_1=0$,
\[
\omega(\del_{u_1}g_0,\dots,\del_{u_{n-1}}g_0,\phi)=\sqrt{\det h_N}.
\]
Moreover, $D_{\del_{u_i}}\phi=0$, hence $g_0$ is an improper affine hypersphere in
$\mathcal H$, with affine normal $\phi$. It is well kwown that any such map
is locally the graph of a function $\mathcal F:N\to\R$ solution of the
Monge--Amp\`ere equation $\det\left(\frac{\del^2\mathcal
F}{\del_{u_i}\del_{u_j}}\right)=1$, so that by (\ref{fnul}),
\begin{equation}
f(t,\vec u)
=\left(\;t^{-2K_1}\;,\;t^{-2K_1}\,\vec u\;,\;
\varphi_0\,t^{-2K_1}\bigl(\mathcal F(\vec u)-\frac1{2K_1}\log t\bigr)\;\right).
\end{equation}
We also have
\[
X_0=\left(\;\ga_1'\;,\;\ga_1'\vec u\;,\;
\varphi_0\left(\ga_1'\mathcal F(\vec u)+2K_1t^{-2K_1-1}\,\frac{\log t}{2K_1}-\frac{t^{-2K_1-1}}{2K_1}\right)\;\right),
\]
with $\ga_1'=-\frac 2t\,K_1\,t^{-2K_1}$.

Recall that the centre map is given by  $c_f=-2\rho_0tX_0$, hence $c_f=Af$ with 
\[
A=\begin{pmatrix}
\text{\footnotesize$4\rho_0K_1$}&0&\dots&\dots&0
\\
0&\text{\footnotesize$4\rho_0K_1$}&\ddots&&\vdots
\\
\vdots&\ddots&\ddots&\ddots&\vdots
\\
0&&\ddots&\text{\footnotesize$4\rho_0K_1$}&0
\\
\frac{\rho_0\varphi_0}{K_1}&0&\dots&0&\text{\footnotesize$4\rho_0K_1$}
\end{pmatrix}.
\]

\end{document}